\numberwithin{equation}{section}
\theoremstyle{plain}
\newtheorem{theorem}[equation]{Theorem}
\newtheorem{thm}[equation]{Theorem}
\newtheorem{lemma}[equation]{Lemma}
\newtheorem{corollary}[equation]{Corollary}
\newtheorem*{proposition*}{Proposition}
\newtheorem{prop}[equation]{Proposition}
\newtheorem*{claim*}{Claim}
\theoremstyle{definition}
\newtheorem{definition}[equation]{Definition}
\newtheorem{remark}[equation]{Remark}
\newcommand{\isom}{\cong}                       
\newcommand{\Wdge}{\bigvee}                     
\newcommand{\cat}[1]{\mathscr{#1}}              
\newcommand{\twocat}[1]{\underline{\mathrm{#1}}} 
\DeclareMathOperator*{\colim}{colim}
\newcommand{\Hom}{\operatorname{Hom} }
\newcommand{\Set}{\mathsf{Set}}
\newcommand{\sSet}{\mathsf{sSet}}
\newcommand{\cof}{\rightarrowtail}              
\begin{document}

\title{Coalgebraic models for combinatorial model categories}
\author{Michael Ching and Emily Riehl}

\begin{abstract}
We show that the category of algebraically cofibrant objects in a combinatorial and simplicial model category $\cat{A}$ has a model structure that is left-induced from that on $\cat{A}$. In particular it follows that any presentable model category is Quillen equivalent (via a single Quillen equivalence) to one in which all objects are cofibrant.
\end{abstract}

\maketitle

Let $\cat{A}$ be a cofibrantly generated model category. Garner's version of the small object argument, described in \cite{garner:2012}, shows that there is a comonad $c$ on $\cat{A}$ for which the underlying endofunctor is a cofibrant replacement functor for the model structure on $\cat{A}$. If $X$ has a coalgebra structure for the comonad $c$, then $X$ is a retract of $cX$, so in particular $X$ is cofibrant. We therefore refer to a coalgebra for the comonad $c$ as an \emph{algebraically cofibrant} object of $\cat{A}$, though this terminology hides the fact that we have chosen a specific comonad $c$ and that there is potentially more than one coalgebra structure on a given cofibrant object.

When $c$ is defined via the Garner small object argument, the $c$-coalgebras include the `presented cell complexes' defined as chosen composites of pushouts of coproducts of generating cofibrations, with morphisms between them given by maps that preserve the cellular structure (see \cite{athorne:2012}).

We fix such a comonad $c$ and denote the category of $c$-coalgebras by $\cat{A}_c$. The forgetful functor $u: \cat{A}_c \to \cat{A}$ has a right adjoint given by taking the cofree coalgebra on an object. We abuse notation slightly and denote this right adjoint also by $c: \cat{A} \to \cat{A}_c$.

Our goal in this paper is a study of the forgetful/cofree adjunction
\begin{equation} \label{eq:adj} u : \cat{A}_c \rightleftarrows \cat{A} : c. \end{equation}
We show that when $\cat{A}$ is a \emph{combinatorial} and \emph{simplicial} model category, there is a (combinatorial and simplicial) model structure on $\cat{A}_c$ that is `left-induced' by that on $\cat{A}$. This means that a morphism $g$ in $\cat{A}_c$ is a weak equivalence or cofibration if and only if $u(g)$ is a weak equivalence, or respectively a cofibration, in $\cat{A}$. Given this, it is easily follows that (\ref{eq:adj}) is a Quillen equivalence.

We conjecture that the result holds without the hypothesis that $\cat{A}$ be simplicial but we have not been able to prove it in this generality.

Dugger shows in \cite{dugger:2001} that for any \emph{presentable} model category $\cat{C}$ (that is, a model category that is Quillen equivalent to a combinatorial model category), there exists a Quillen equivalence
\[ \cat{A} \rightleftarrows \cat{C} \]
in which $\cat{A}$ is combinatorial and simplicial. Composing with (\ref{eq:adj}) we obtain a single Quillen equivalence
\[ \cat{A}_c \rightleftarrows \cat{C} \]
such that every object in $\cat{A}_c$ is cofibrant. This improves on Dugger's~\cite[Corollary 1.2]{dugger:2001} in that we have a single Quillen equivalence rather than a zigzag.

Our result is dual to a corresponding theorem of Nikolaus~\cite{nikolaus:2011} which shows that the category of algebraically fibrant objects has a right-induced model structure, provided that every trivial cofibration is a monomorphism. Our approach is rather different and requires instead the conditions that the underlying model structure be combinatorial (not just cofibrantly generated) and simplicially-enriched.

Our result is an application of work of Bayeh et al.~\cite{bayeh/hess/karpova/kedziorek/riehl/shipley:2014} in which general conditions are established for the existence of left-induced model structures. There are two main parts: (1) we show that when $\cat{A}$ is locally presentable, the category $\cat{A}_c$ is also locally presentable; (2) we verify the conditions of \cite[Theorem 2.23]{bayeh/hess/karpova/kedziorek/riehl/shipley:2014} by showing that any morphism in $\cat{A}_c$ with the right lifting property with respect to all cofibrations is a weak equivalence.

The proof of part (1) depends crucially on the fact that accessible categories and accessible functors are closed under a certain class of 2-categorical limit constructions. This is closely related to the essential ingredient in Theorem~2.23 of \cite{bayeh/hess/karpova/kedziorek/riehl/shipley:2014}, which is a result of Makkai and Rosicky~\cite{makkai/rosicky:2013} that states that locally presentable categories with a cofibrantly generated weak factorization system, and appropriate functors between such, are closed under the same class of 2-categorical limits. We defer this part of our proof to the appendix, allowing for a more leisurely treatment.

\subsection*{Acknowledgments} This paper was written while both authors were visitors at the Mathematical Sciences Research Institute. The first author was supported by National Science Foundation grant DMS-1144149, and the second author by National Science Foundation Postdoctoral Research Fellowship DMS-1103790. The impetus for this paper was its application to the homotopic descent theory developed by the first author and Greg Arone, whom we thank for suggesting we look at it further. We also owe gratitude to Kathryn Hess and Brooke Shipley for their work on model categories of coalgebras over a comonad.

\section{Combinatorial simplicial model categories}

In this section, we explore the interaction between the locally presentable and simplicially enriched structures on a model category that is both simplicial and combinatorial. We also prove that there exists a cofibrant replacement comonad satisfying the conditions needed for our main theorem. We adopt the convention that all categories appearing in this paper are locally small,  meaning that there is a mere set of morphisms between any two fixed objects. First, recall the following definitions.

\begin{definition}
Let $\cat{A}$ be a cocomplete category. For a regular cardinal $\lambda$, an object $X$ in $\cat{A}$ is \emph{$\lambda$-presentable} if the functor $\cat{A}(X,-) : \cat{A} \to \Set$ preserves colimits of $\lambda$-filtered diagrams\footnote{A category $\cat{C}$ is \emph{$\lambda$-filtered} if every subcategory of $\cat{C}$ with fewer than $\lambda$ morphisms has a cocone in $\cat{C}$. A diagram is \emph{$\lambda$-filtered} when its indexing category is $\lambda$-filtered.}. A category $\cat{A}$ is \emph{locally $\lambda$-presentable} if it is cocomplete and has a (small) set $\cat{P}$ of $\lambda$-presentable objects  such that every object in $\cat{A}$ is isomorphic to a $\lambda$-filtered colimit of objects in $\cat{P}$.

A category is \emph{locally presentable} if it is locally $\lambda$-presentable for some regular cardinal $\lambda$. Locally presentable categories are abundant, particularly in `algebraic' or `simplicial' contexts. We refer the reader to \cite{adamek/rosicky:1994} for more details.
\end{definition}

Jeff Smith introduced the notion of a \emph{combinatorial} model category: one that is cofibrantly generated and for which the underlying category is locally presentable. In this paper we are interested in model categories $\cat{A}$ that are both combinatorial and \emph{simplicial}. The following result gives us control over how these two structures interact.

\begin{lemma} \label{lem:enriched-hom}
Let $\cat{A}$ be a combinatorial and simplicial model category. Then there are arbitrarily large cardinals $\lambda$ such that:
\begin{enumerate}
  \item $\cat{A}$ is locally $\lambda$-presentable;
  \item $\cat{A}$ is cofibrantly generated with a set of generating cofibrations for which the domains and codomains are $\lambda$-presentable objects;
  \item an object $X \in \cat{A}$ is $\lambda$-presentable if and only if the functor $\Hom(X,-): \cat{A} \to \sSet$, given by the simplicial enrichment of $\cat{A}$, preserves $\lambda$-filtered colimits.
\end{enumerate}
\end{lemma}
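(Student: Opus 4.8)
The plan is to pin down, once and for all, a single regular cardinal $\lambda$ large enough to satisfy several accessibility conditions at once, and then deduce (3) from the tensor--cotensor structure. Conditions (1) and (2) ask for almost nothing: since $\cat{A}$ is combinatorial it is locally $\mu_0$-presentable for some $\mu_0$ and has a small set $I$ of generating cofibrations whose domains and codomains are $\kappa_0$-presentable for some $\kappa_0$, and I would then invoke the standard facts (see \cite{adamek/rosicky:1994}) that a locally $\mu_0$-presentable category is locally $\lambda$-presentable for every regular $\lambda\geq\mu_0$, and that a $\kappa_0$-presentable object is $\lambda$-presentable for every regular $\lambda\geq\kappa_0$, so that any $\lambda\geq\max(\mu_0,\kappa_0)$ already meets (1) and (2).

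The heart of the matter is (3), and I would begin by reformulating the condition on the enriched hom. Because $\cat{A}$ is simplicial it is tensored and cotensored over $\sSet$, and combining the Yoneda lemma with the tensor adjunction gives $\Hom(X,Y)_n\cong\cat{A}(X\otimes\Delta[n],Y)$, naturally in $Y$. Since colimits in $\sSet$ are computed levelwise, for a $\lambda$-filtered diagram $(Y_d)$ in $\cat{A}$ the canonical comparison $\colim_d\Hom(X,Y_d)\to\Hom(X,\colim_d Y_d)$ is an isomorphism precisely when $X\otimes\Delta[n]$ is $\lambda$-presentable in $\cat{A}$ for every $n\geq 0$; equivalently, $\Hom(X,-)$ is $\lambda$-accessible if and only if its left adjoint $X\otimes-\colon\sSet\to\cat{A}$ preserves $\lambda$-presentable objects. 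Granting this, the implication ``$\Hom(X,-)$ preserves $\lambda$-filtered colimits $\Rightarrow X$ is $\lambda$-presentable'' is free and needs no hypothesis on $\lambda$: take $n=0$ and use $X\otimes\Delta[0]\cong X$ together with $\Hom(X,Y)_0\cong\cat{A}(X,Y)$.

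The real work is the converse, for which I need $\lambda$ such that every $\lambda$-presentable $X$ has $X\otimes\Delta[n]$ $\lambda$-presentable for all $n$. Here I would exploit accessibility of the bifunctor $\otimes\colon\cat{A}\cross\sSet\to\cat{A}$: it is cocontinuous in each variable, and since the diagonal of a filtered category is final in its square it therefore preserves all filtered colimits, hence is an accessible functor between locally presentable categories. By the structure theory of accessible functors (\cite{adamek/rosicky:1994}), there is a cofinal class of regular cardinals $\lambda$ for which $\otimes$ is $\lambda$-accessible and sends $\lambda$-presentable objects of $\cat{A}\cross\sSet$ to $\lambda$-presentable objects of $\cat{A}$. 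Intersecting this class with $[\max(\mu_0,\kappa_0),\infty)$ still leaves arbitrarily large $\lambda$, and for any such $\lambda$ the argument closes: each $\Delta[n]$ is finitely --- hence $\lambda$- --- presentable in $\sSet$, so if $X$ is $\lambda$-presentable the pair $(X,\Delta[n])$ is $\lambda$-presentable in $\cat{A}\cross\sSet$, whence $X\otimes\Delta[n]$ is $\lambda$-presentable, for every $n$.

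I expect the main obstacle to be exactly this last point: it is genuinely false that tensoring a $\lambda$-presentable object with $\Delta[n]$ preserves $\lambda$-presentability for an arbitrary $\lambda$ (the bifunctor $\otimes$ is not jointly cocontinuous), so one must choose $\lambda$ via the accessibility machinery and check that the relevant classes of ``good'' cardinals are cofinal so that the ``arbitrarily large'' clause of the lemma survives. A variant that sidesteps bifunctors: each cotensor $(-)^{\Delta[n]}\colon\cat{A}\to\cat{A}$ is a right adjoint between locally presentable categories, hence accessible, and the countable family $\{(-)^{\Delta[n]}\}_{n\geq 0}$ is simultaneously $\lambda$-accessible for a cofinal class of $\lambda$; for such $\lambda$ the left adjoints $-\otimes\Delta[n]$ preserve $\lambda$-presentable objects, which is precisely what is needed.
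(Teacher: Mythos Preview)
Your proposal is correct and follows essentially the same strategy as the paper: reduce condition (3) to the statement that $X\otimes\Delta^n$ is $\lambda$-presentable for every $n$ whenever $X$ is, and then invoke the uniformization result \cite[2.19]{adamek/rosicky:1994} to find arbitrarily large $\lambda$ for which this holds. The only difference is packaging: the paper applies \cite[2.19]{adamek/rosicky:1994} directly to the countable family of cocontinuous endofunctors $-\otimes\Delta^n\colon\cat{A}\to\cat{A}$, which is exactly your ``variant'' stated from the left-adjoint side and is slightly more direct than routing through the bifunctor $\otimes\colon\cat{A}\times\sSet\to\cat{A}$.
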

\begin{proof}
Since $\cat{A}$ is combinatorial, it is locally $\mu$-presentable for some cardinal $\mu$. There is also then some cardinal $\mu' > \mu$ such that the domains and codomains of the generating cofibrations are $\mu'$-presentable.

Now consider the functors $- \otimes \Delta^n : \cat{A} \to \cat{A}$, for $n \geq 0$, given by the simplicial tensoring on $\cat{A}$. These functors preserve all colimits and so by \cite[2.19]{adamek/rosicky:1994} there are arbitrarily large cardinals $\lambda$ such that all of these functors take $\lambda$-presentable objects to $\lambda$-presentable objects. Note that for any such $\lambda > \mu'$, conditions (1) and (2) of the lemma are satisfied.

Suppose that $X \in \cat{A}$ is $\lambda$-presentable and $(Y^j)_{j \in \cat{J}}$ is a $\lambda$-filtered diagram in $\cat{A}$. By construction of $\lambda$, $X \otimes \Delta^n$ is also $\lambda$-presentable and so
\begin{align*} \Hom(X, \colim_{\cat{J}} Y^j)_n &\cong \sSet(\Delta^n, \Hom(X,\colim_{\cat{J}} Y^j)) \cong \cat{A}( X \otimes \Delta^n, \colim_{\cat{J}} Y^j) \\ &\cong \colim_{\cat{J}} \cat{A}(X \otimes \Delta^n, Y^j)\cong \colim_{\cat{J}} \sSet(\Delta^n, \Hom(X, Y^j)) \cong \colim_{\cat{J}} \Hom(X,Y^j)_n \end{align*}
which implies that $\Hom(X,-)$ preserves $\lambda$-filtered colimits, because colimits in $\sSet$ are defined pointwise. Conversely, if $\Hom(X,-)$ preserves $\lambda$-filtered colimits, then by a similar sequence of bijections, so does $\cat{A}(X \otimes \Delta^n, -)$ for all $n \geq 0$. Hence, taking $n = 0$, $X$ is $\lambda$-presentable. Thus $\lambda$ satisfies condition (3) also.
\end{proof}

As described in the introduction, any cofibrantly generated model category $\cat{A}$ has a cofibrant replacement comonad $c: \cat{A} \to \cat{A}$ constructed using Garner's small object argument. Blumberg and Riehl \cite{blumberg/riehl:2012} have noted that when $\cat{A}$ is, in addition, a simplicial model category, the comonad $c$ can be taken to be simplicially enriched. In this case algebraically cofibrant objects also include `enriched cell complexes' built from tensors of generating cofibrations with arbitrary simplicial sets. For our main result we require a cofibrant replacement comonad that is both simplicial and preserves $\lambda$-filtered colimits.

\begin{lemma} \label{lem:simp-c}
Let $\cat{A}$ be a combinatorial and simplicial model category with $\lambda$ as in Lemma~\ref{lem:enriched-hom}. Then there is a simplicially-enriched cofibrant replacement comonad $c: \cat{A} \to \cat{A}$ that preserves $\lambda$-filtered colimits.
\end{lemma}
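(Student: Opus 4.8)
The plan is to realize $c$ through Garner's algebraic small object argument \cite{garner:2012}, run in the simplicially-enriched form recalled above from Blumberg and Riehl \cite{blumberg/riehl:2012}, and then to check that each functor it produces is $\lambda$-accessible, i.e.\ preserves $\lambda$-filtered colimits.

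Using Lemma~\ref{lem:enriched-hom}(2) I would fix a set $I$ of generating cofibrations whose domains and codomains are $\lambda$-presentable. Garner's construction then produces an algebraic weak factorization system cofibrantly generated by $I$, and in particular a comonad $\mathbb{L}$ on the arrow category $\cat{A}^{\mathbf 2}$; since the value of $\mathbb{L}$ at an arrow $\emptyset \to X$ again has domain $\emptyset$, restricting $\mathbb{L}$ to the full subcategory of such arrows (which may be identified with $\cat{A}$) defines the cofibrant replacement comonad $c$ on $\cat{A}$. When $\cat{A}$ is simplicial this can all be done $\sSet$-enriched, so that $c$ is simplicially enriched; concretely the one-step functor $L^1$ on $\cat{A}^{\mathbf 2}$ is assembled from the enriched hom-objects out of the domains and codomains of the maps in $I$, simplicial tensors with those same objects, and coproducts (over $I$) and pushouts.

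The crux is that $L^1$ preserves $\lambda$-filtered colimits. For $i \in I$ the objects $\mathrm{dom}(i)$ and $\mathrm{cod}(i)$ are $\lambda$-presentable, so by Lemma~\ref{lem:enriched-hom}(3) the enriched hom-functors $\Hom(\mathrm{dom}(i),-)$ and $\Hom(\mathrm{cod}(i),-)\colon \cat{A}\to\sSet$ preserve $\lambda$-filtered colimits; the simplicial tensors $\mathrm{dom}(i)\otimes-$ and $\mathrm{cod}(i)\otimes-$ preserve colimits in the $\cat{A}$-variable; and coproducts over the set $I$ and pushouts, being colimits, commute with $\lambda$-filtered colimits. (This is precisely where condition (3) of Lemma~\ref{lem:enriched-hom}, as opposed to mere $\lambda$-presentability in the underlying category, is needed.) Next, $\mathbb{L}$ is built from $L^1$ by Garner's transfinite construction, which --- as $\cat{A}$ is locally presentable --- converges after a bounded number of steps, the bound $\gamma$ depending only on $\cat{A}$ and $I$, not on $\lambda$. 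Since the cardinals provided by Lemma~\ref{lem:enriched-hom} are cofinal in the cardinals, I would choose $\lambda$ among them with $\lambda>\gamma$ and $\lambda>|I|$; then $\mathbb{L}$ is obtained from the $\lambda$-accessible functor $L^1$ using only colimits indexed by categories with fewer than $\lambda$ morphisms, hence is itself $\lambda$-accessible. Finally $X\mapsto(\emptyset\to X)\colon\cat{A}\to\cat{A}^{\mathbf 2}$ preserves $\lambda$-filtered colimits (colimits in $\cat{A}^{\mathbf 2}$ are pointwise and the $\lambda$-filtered colimit of the constant diagram at $\emptyset$ is $\emptyset$) and the codomain projection $\cat{A}^{\mathbf 2}\to\cat{A}$ preserves all colimits, so $c$ preserves $\lambda$-filtered colimits.

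I expect the transfinite step to be the main obstacle: one must confirm that Garner's convergence analysis bounds the length of the construction independently of $\lambda$ --- so that enlarging $\lambda$ within the cofinal class of Lemma~\ref{lem:enriched-hom} keeps $L^1$ $\lambda$-accessible while shrinking the construction to ``$\lambda$-small'' size --- and that this analysis applies to the $\sSet$-enriched one-step functor and not merely the unenriched one, which it does because $\sSet$ is itself locally presentable and $\cat{A}$ is tensored and cotensored over it.
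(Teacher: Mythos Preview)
Your approach is essentially the paper's: run the simplicially-enriched Garner small object argument on the generating cofibrations, restrict to maps with initial domain to extract the comonad, and verify $\lambda$-accessibility by decomposing the one-step functor into enriched homs out of $\lambda$-presentable objects together with colimit-type operations. The paper packages the hom step slightly differently, as the pullback functor $\mathrm{Sq}(i,-)$ (the ``space of commutative squares from $i$ to $f$''), and notes that this finite limit commutes with filtered colimits; you describe the one-step functor in terms of homs, tensors, coproducts and pushouts but omit this pullback, which is part of the construction.

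Your main anticipated obstacle---the transfinite step---is dispatched more simply in the paper. Since each successor stage applies a $\lambda$-accessible functor and each limit stage is a colimit of functors already known to be $\lambda$-accessible, and since colimits commute with colimits, a straightforward transfinite induction shows every stage preserves $\lambda$-filtered colimits regardless of the length of the construction. There is therefore no need to bound the convergence ordinal $\gamma$ and enlarge $\lambda$ past it; the lemma holds for every $\lambda$ satisfying Lemma~\ref{lem:enriched-hom}, not only sufficiently large ones (your version proves a slightly weaker statement, though one adequate for the applications).
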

\begin{proof}
The functorial factorization produced by the simplicially enriched version of the Garner small object argument is defined by an iterated colimit process described in Theorem 13.2.1 of \cite{riehl:2014}. We apply this construction to the usual set of generating cofibrations to obtain a simplicially enriched functorial factorization of any map as a cofibration followed by a trivial fibration in the simplicial model structure on $\cat{A}$. The simplicially-enriched cofibrant replacement comonad $c$ is extracted by restricting the factorization to maps whose domain is initial; see \cite[Corollary 13.2.4]{riehl:2014}.

It remains to argue that $c$ preserves $\lambda$-filtered colimits. The functor $c$ is built from various colimit constructions, which commute with all colimits, and from functors $\mathrm{Sq}(i,-)\colon \cat{A}^2 \to \sSet$ defined for each generating cofibration $i$. For a morphism $f$ in $\cat{A}$, the simplicial set $\mathrm{Sq}(i,f)$ is the `space of commutative squares from $i$ to $f$' defined via the pullback
\[ \xymatrix{ \mathrm{Sq}(i,f) \ar[r] \ar[d] \ar@{}[dr]|(.2){\lrcorner} & \Hom(\mathrm{dom}\,i,\mathrm{dom}f) \ar[d] \\ \Hom(\mathrm{cod}\,i,\mathrm{cod}f) \ar[r] & \Hom(\mathrm{dom}\,i,\mathrm{cod}f)}\] The domain and codomain functors preserve all colimits, because they are defined pointwise. The enriched representables $\Hom(\mathrm{dom}\, i,-)$ and $\Hom(\mathrm{cod}\,i,-)$ preserve $\lambda$-filtered colimits by \ref{lem:enriched-hom}.(2). Finally, the pullback, a finite limit, commutes with all filtered colimits.
\end{proof}

The main result we need from this section is the following.

\begin{theorem} \label{thm:simp-c}
Let $\cat{A}$ be a combinatorial and simplicial model category and let $c$ be as in Lemma~\ref{lem:simp-c}. Then the category $\cat{A}_c$ of $c$-coalgebras is locally presentable, simplicially enriched, and the forgetful/cofree adjunction
\[ u : \cat{A}_c \rightleftarrows \cat{A} : c \]
is simplicial. Moreover, $\cat{A}_c$ is tensored and cotensored over simplicial sets.
\end{theorem}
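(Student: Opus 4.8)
The plan is to establish the four assertions --- local presentability, the simplicial enrichment, the simpliciality of the adjunction (\ref{eq:adj}), and the tensoring and cotensoring --- largely independently, invoking Lemmas~\ref{lem:enriched-hom} and~\ref{lem:simp-c} at the points where cardinals or the simplicial structure of $c$ enter. Throughout I would use that, being a simplicial model category, $\cat{A}$ is tensored and cotensored over $\sSet$; I write $\otimes$ for the tensor on $\cat{A}$ and $\Map(-,-)$ for the internal hom of $\sSet$.

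I would treat local presentability first, since it is the crux of the theorem, and defer the details to the appendix. By Lemma~\ref{lem:simp-c} the endofunctor $c$ preserves $\lambda$-filtered colimits and so is accessible, and $\cat{A}_c$ arises from $\cat{A}$ and $c$ by a 2-categorical limit construction: an inserter producing the category of objects $X$ equipped with a morphism $X \to cX$, followed by two equifiers imposing the counit and coassociativity axioms. Accessible categories and accessible functors are closed under inserters and equifiers, so $\cat{A}_c$ is accessible; it is also cocomplete, because colimits in $\cat{A}_c$ are created by $u$ (as for the coalgebras over any comonad), and hence it is locally presentable. The appendix carries out this argument and proves the closure result on which it rests.

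For the simplicial structure I would use that $c$ is a $\sSet$-enriched comonad (Lemma~\ref{lem:simp-c}): its underlying endofunctor is a $\sSet$-functor and $\epsilon$, $\delta$ are $\sSet$-natural. Then $\cat{A}_c$ becomes a $\sSet$-category with $\cat{A}_c(X,Y)$ defined as the equalizer in $\sSet$ of the two maps $\Hom(uX,uY) \rightrightarrows \Hom(uX,cuY)$ given by post-composition with the structure map of $Y$ and by the action of $c$ on hom-objects followed by pre-composition with the structure map of $X$; composition and units restrict from $\cat{A}$, and the equalizer projections make $u$ a $\sSet$-functor. The cofree coalgebra $cA$, with structure map $\delta_A$, then represents this $\sSet$-functor: using the comonad identities and the $\sSet$-naturality of $\epsilon$, post-composition with $\epsilon_A$ identifies the equalizer computing $\cat{A}_c(X,cA)$ with $\Hom(uX,A)$, $\sSet$-naturally in both variables, so (\ref{eq:adj}) is a $\sSet$-adjunction --- that is, simplicial. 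In particular the left adjoint $u$ preserves all weighted colimits that exist in $\cat{A}_c$. I expect this to be a routine transcription of the unenriched theory.

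For the tensors and cotensors I would proceed asymmetrically. The tensor $A \otimes K$ of a coalgebra $A$ by $K \in \sSet$ I would construct by hand: its underlying object is the tensor $uA \otimes K$ formed in $\cat{A}$, with coalgebra structure the composite $uA \otimes K \xrightarrow{\alpha_A \otimes K} cuA \otimes K \to c(uA \otimes K)$, the last arrow being the canonical comparison carried by the $\sSet$-functor $c$; the coalgebra axioms follow from those for the structure map $\alpha_A$ of $A$ together with the $\sSet$-naturality of $\epsilon$ and $\delta$, and since $\Map(K,-)$ preserves equalizers one checks that $\cat{A}_c(uA \otimes K, B) \cong \Map(K, \cat{A}_c(A,B))$, so this is the tensor in $\cat{A}_c$ and $u$ preserves it. For the cotensor I would instead use local presentability: $- \otimes K \colon \cat{A}_c \to \cat{A}_c$ is a $\sSet$-functor preserving all colimits, hence, $\cat{A}_c$ being locally presentable, it admits a right adjoint $A \mapsto A^K$ by the adjoint functor theorem; this right adjoint is automatically $\sSet$-enriched and provides the cotensor. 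The main obstacle is thus the accessibility of $\cat{A}_c$: since $c$ is assembled from colimits it preserves essentially no limits, so neither the accessibility nor the cotensors of $\cat{A}_c$ can be had by merely letting $u$ create them; one genuinely needs the stability of accessible categories under 2-limits, which is the role played by the appendix. Everything else is formal, given that the comonad $c$ is $\sSet$-natural.
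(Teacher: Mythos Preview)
Your proposal is correct and follows essentially the same route as the paper: local presentability is deferred to the appendix via the inserter--equifier construction, the simplicial enrichment and simpliciality of the adjunction are the standard ones for an enriched comonad (the paper simply cites Hess for this), the tensor is built on $uX \otimes K$ exactly as you describe, and the cotensor is obtained from the (special) adjoint functor theorem once $\cat{A}_c$ is known to be locally presentable. Where you sketch the equalizer description of $\cat{A}_c(X,Y)$ and the explicit coalgebra structure on $uA \otimes K$, the paper just points to the literature, but the content is the same.
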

\begin{proof}
We defer the proof that $\cat{A}_c$ is locally presentable to Proposition~\ref{prop:A_c-locally-presentable} in the appendix.

It is well-known that the comonadic adjunction for a simplicially enriched comonad on a simplicial category is simplicially enriched; see, e.g.,~\cite[\S3.2]{hess:2010}. The tensor $X \otimes K$ of a $c$-coalgebra $X$ with a simplicial set $K$ is defined as in \cite[Lemma 3.11]{hess:2010}. Observe that $u (X \otimes K) \cong uX \otimes K$  because left adjoints of simplicially enriched adjunctions preserve tensors \cite[3.7.10]{riehl:2014}. In this way, each simplicial set defines a functor $- \otimes K \colon \cat{A}_c \to \cat{A}_c$, which preserves all colimits because these colimits are created by the forgetful functor $u: \cat{A}_c \to \cat{A}$, and tensoring with $K$ preserves colimits in $\cat{A}$. By the special adjoint functor theorem, any cocontinuous functor between locally presentable categories has a right adjoint,\footnote{See \protect\cite[1.58]{adamek/rosicky:1994} for a proof that locally presentable categories are co-wellpowered.} so $\cat{A}_c$ is also cotensored over simplicial sets.
\end{proof}

It turns out that the $\lambda$-presentable objects in $\cat{A}_c$ can be easily described.

\begin{lemma} \label{lem:pres-A_c}
Let $\lambda$ and $c$ be as in Lemma~\ref{lem:simp-c}. Then a $c$-coalgebra $X \in \cat{A}_c$ is $\lambda$-presentable if and only if the underlying object $uX$ is $\lambda$-presentable in $\cat{A}$.
\end{lemma}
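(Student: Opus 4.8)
The plan is to prove the two implications separately, using two facts: the underlying endofunctor $c$ preserves $\lambda$-filtered colimits (Lemma~\ref{lem:simp-c}), and $\lambda$-filtered colimits commute with finite limits in $\Set$. First I would record two preliminary observations. Since $c$, and hence $c^2$, preserves $\lambda$-filtered colimits, the standard fact that the forgetful functor from $c$-coalgebras creates every colimit preserved by $c$ and $c^2$ shows that $u\colon\cat{A}_c\to\cat{A}$ creates $\lambda$-filtered colimits; such colimits exist in $\cat{A}_c$ by Theorem~\ref{thm:simp-c} and are computed on underlying objects. It then follows that the cofree functor $c\colon\cat{A}\to\cat{A}_c$ also preserves $\lambda$-filtered colimits: for a $\lambda$-filtered diagram $(A^j)$ in $\cat{A}$, the coalgebra $c(\colim_j A^j)$ equipped with the cocone $\bigl(c(\iota_j)\bigr)_j$ has underlying object the colimit $\colim_j c(uA^j)$, so by the uniqueness clause in ``creates'' it is $\colim_j cA^j$ in $\cat{A}_c$.

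For the ``only if'' direction, suppose $X$ is $\lambda$-presentable in $\cat{A}_c$. Using the adjunction $u\dashv c$ of~(\ref{eq:adj}), for any $\lambda$-filtered diagram $(A^j)$ in $\cat{A}$ there are natural isomorphisms
\[ \textstyle \cat{A}(uX,\colim_j A^j)\cong\cat{A}_c(X,c(\colim_j A^j))\cong\cat{A}_c(X,\colim_j cA^j)\cong\colim_j\cat{A}_c(X,cA^j)\cong\colim_j\cat{A}(uX,A^j), \]
the middle two using that $c\colon\cat{A}\to\cat{A}_c$ preserves $\lambda$-filtered colimits and that $X$ is $\lambda$-presentable. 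Hence $\cat{A}(uX,-)$ preserves $\lambda$-filtered colimits, so $uX$ is $\lambda$-presentable; this is just the general principle that a left adjoint whose right adjoint preserves $\lambda$-filtered colimits preserves $\lambda$-presentable objects.

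For the ``if'' direction, suppose $uX$ is $\lambda$-presentable in $\cat{A}$. Writing $h_Y\colon uY\to c(uY)$ for the structure map of a coalgebra $Y$, a morphism $X\to Y$ in $\cat{A}_c$ is precisely a morphism $f\colon uX\to uY$ in $\cat{A}$ with $h_Y\circ f = c(f)\circ h_X$; thus, naturally in $Y$, the functor $\cat{A}_c(X,-)\colon\cat{A}_c\to\Set$ is the pointwise equalizer of the parallel pair
\[ \cat{A}(uX,u-)\;\rightrightarrows\;\cat{A}(uX,cu-) \]
whose first leg post-composes with the structure map and whose second leg sends $f$ to $c(f)\circ h_X$. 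Now $u$ preserves $\lambda$-filtered colimits as a left adjoint and the endofunctor $c$ preserves them by Lemma~\ref{lem:simp-c}, so $u$ and $cu$ both do; since $uX$ is $\lambda$-presentable, $\cat{A}(uX,u-)$ and $\cat{A}(uX,cu-)$ therefore preserve $\lambda$-filtered colimits. As $\lambda$-filtered colimits commute with equalizers (a finite limit) in $\Set$, their equalizer $\cat{A}_c(X,-)$ does too, so $X$ is $\lambda$-presentable in $\cat{A}_c$. I expect the only place needing genuine care to be this last direction: verifying naturality in $Y$ of the two legs above — which is exactly the condition that coalgebra morphisms commute with structure maps — and checking that along a $\lambda$-filtered diagram the induced parallel pair of colimits really is the colimit of the parallel pairs, so that the interchange of $\lambda$-filtered colimits with the equalizer may be applied termwise in $\Set$.
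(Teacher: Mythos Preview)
Your proposal is correct and follows essentially the same approach as the paper: the ``only if'' direction uses the adjunction $u\dashv c$ together with the fact that the cofree functor preserves $\lambda$-filtered colimits, and the ``if'' direction expresses $\cat{A}_c(X,-)$ as an equalizer of the parallel pair $\cat{A}(uX,u-)\rightrightarrows\cat{A}(uX,cu-)$ and then uses that finite limits commute with $\lambda$-filtered colimits in $\Set$. Your treatment is slightly more explicit about why $u$ creates and $c$ preserves $\lambda$-filtered colimits, and about the naturality of the two legs of the equalizer, but these are precisely the details underlying the paper's more compressed argument.
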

\begin{proof}
First note that because the comonad $c$ preserves $\lambda$-filtered colimits and the left adjoint $u \colon \cat{A}_c \to \cat{A}$ creates them, the cofree functor $c \colon \cat{A} \to \cat{A}_c$ also preserves $\lambda$-filtered colimits.

Now suppose that $uX$ is $\lambda$-presentable and consider a $\lambda$-filtered diagram $(Y^j)_{j \in \cat{J}}$ in $\cat{A}_c$. Then we have
\[ \cat{A}_c(X,\colim Y^j) = \lim (\cat{A}(uX,u\colim Y^j) \rightrightarrows \cat{A}(uX,ucu \colim Y^j)). \]
Since $c$, $u$, $\cat{A}(uX,-)$, and finite limits of sets commute with $\lambda$-filtered colimits, so too does $\cat{A}_c(X,-)$ and thus $X$ is $\lambda$-presentable.

Conversely, suppose that $X$ is $\lambda$-presentable and consider a $\lambda$-filtered diagram $(A^j)_{j\in \cat{J}}$ in $\cat{A}$. Then, by the $u \dashv c$ adjunction, we have
\[ \cat{A}(uX,\colim A^j) \isom \cat{A}_c(X,c \colim A^j). \]
Since $c$ and $\cat{A}_c(X,-)$ commute with $\lambda$-filtered colimits, so too does $\cat{A}(uX,-)$ and so $uX$ is $\lambda$-presentable.
\end{proof}

We also need the following result from \cite[3.6]{raptis/rosicky:2014} which slightly strengthens that of Dugger \cite[7.3]{dugger:2001}.

\begin{lemma}[Raptis-Rosick\'{y}] \label{lem:raptis}
Let $\cat{A}$ be a cofibrantly generated model category with $\lambda$ as in Lemma~\ref{lem:enriched-hom}.(2). Then taking $\lambda$-filtered colimits preserves weak equivalences.
\end{lemma}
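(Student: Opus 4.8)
The plan is to deduce the statement from two facts: (i) a $\lambda$-filtered colimit of trivial fibrations, formed in the arrow category $\cat{A}^{\mathbf{2}}$, is again a trivial fibration; and (ii) the colimit functor out of a small diagram category is left Quillen for the projective model structure. Only (i) uses the cardinal $\lambda$; (ii) and the remainder of the argument need merely that $\cat{A}$ is a cofibrantly generated model category. I would deliberately avoid the more hands-on route of functorially factoring each $\alpha^j$ as a cofibration followed by a trivial fibration and reducing to trivial cofibrations, because that reduction does not obviously terminate — one cannot accessibly factor a trivial cofibration through a fibration using only the generating cofibrations — whereas passing to the projective model structure lets us cofibrantly replace the whole diagram at once.

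For (i): suppose $\cat{J}$ is $\lambda$-filtered and $(g^j)_{j\in\cat{J}}$ is a diagram in $\cat{A}^{\mathbf{2}}$ in which every $g^j$ is a trivial fibration. Since trivial fibrations are precisely the maps with the right lifting property against the chosen set $I$ of generating cofibrations, and the domains and codomains of the maps in $I$ are $\lambda$-presentable by Lemma~\ref{lem:enriched-hom}.(2), this is the usual compactness argument: given a lifting problem of some $i\in I$ against $\colim_{\cat{J}} g^j$, both maps of the square, together with the equation asserting that the square commutes, factor through a single stage $j$ of the $\lambda$-filtered diagram; a solution of the resulting lifting problem against $g^j$ maps forward to a solution of the original, so $\colim_{\cat{J}} g^j$ is a trivial fibration.

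For (ii) and the conclusion: let $\cat{J}$ be $\lambda$-filtered and let $\alpha\colon F\Rightarrow G$ be a natural transformation of diagrams $\cat{J}\to\cat{A}$ that is a pointwise weak equivalence. Equip $\cat{A}^{\cat{J}}$ with its projective model structure, which exists and is cofibrantly generated because $\cat{A}$ is and $\cat{J}$ is small, and in which weak equivalences, fibrations, and trivial fibrations are all detected pointwise. Then the constant-diagram functor $\cat{A}\to\cat{A}^{\cat{J}}$ preserves fibrations and trivial fibrations, so its left adjoint $\colim\colon\cat{A}^{\cat{J}}\to\cat{A}$ is left Quillen. Let $q_D\colon QD\to D$ be a functorial cofibrant replacement in $\cat{A}^{\cat{J}}$; then $QF,QG$ are projectively cofibrant, $q_F,q_G$ are pointwise trivial fibrations, and naturality of $q$ together with two-out-of-three shows $Q\alpha\colon QF\to QG$ is a pointwise weak equivalence. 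Hence $\colim Q\alpha$ is a weak equivalence by Ken Brown's lemma, while $\colim q_F$ and $\colim q_G$ are trivial fibrations by step (i) — here is where $\cat{J}$ being $\lambda$-filtered is used — and in particular weak equivalences. Two-out-of-three applied to the commuting square
\[ \xymatrix{ \colim QF \ar[r]^{\colim Q\alpha} \ar[d]_{\colim q_F} & \colim QG \ar[d]^{\colim q_G} \\ \colim F \ar[r]_{\colim \alpha} & \colim G } \]
then shows that $\colim\alpha$ is a weak equivalence.

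The content is concentrated in step (i), which is the sole point at which the combinatorial hypothesis enters through $\lambda$; step (ii) is then formal. I do not anticipate a real obstacle, but two points want care: checking that the projective model structure on $\cat{A}^{\cat{J}}$ is available for the small $\lambda$-filtered indexing category at hand — standard for cofibrantly generated $\cat{A}$ — and the bookkeeping in the compactness argument of step (i), where $\lambda$-filteredness and $\lambda$-presentability of the domains and codomains of $I$ must be combined to transport an entire commutative square, not merely its vertices, to a common stage.
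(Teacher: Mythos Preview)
Your proposal is correct and follows essentially the same approach as the paper: both equip $\cat{A}^{\cat{J}}$ with the projective model structure, observe that $\colim$ is left Quillen, and reduce the problem to showing that $\lambda$-filtered colimits of pointwise trivial fibrations are trivial fibrations, which is the compactness argument using $\lambda$-presentability of the domains and codomains of the generating cofibrations. The only cosmetic difference is that the paper factors $\alpha$ directly as a projective trivial cofibration followed by a pointwise trivial fibration, whereas you cofibrantly replace $F$ and $G$ separately and invoke Ken Brown; both routes land on the same key step~(i).
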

\begin{proof}
We wish to show that the colimit functor $\colim\colon \cat{A}^\cat{J} \to \cat{A}$ preserves weak equivalences, when $\cat{J}$ is $\lambda$-filtered. Endowing the domain with the projective model structure, $\colim$ is left Quillen. A given pointwise weak equivalence between $\cat{J}$-indexed diagrams may be factored as a projective trivial cofibration followed by a pointwise trivial fibration. As $\colim$ sends projective trivial cofibrations to trivial cofibrations, it suffices to show that $\colim$ carries pointwise trivial fibrations to trivial fibrations. This follows because the domains and codomains of the generating cofibrations are $\lambda$-presentable and $\cat{J}$ is $\lambda$-filtered.
\end{proof}

\section{The model structure on algebraically cofibrant objects}

We now turn to the proof of our main theorem, that for a combinatorial simplicial model category $\cat{A}$, the adjunction $u \dashv c$ `left-induces' a model structure on $\cat{A}_c$. We therefore make the following definitions.

\begin{definition} \label{def:A_c-weq}
A morphism $f$ in $\cat{A}_c$ is a \emph{weak equivalence} if the underlying morphism $u(f)$ is a weak equivalence in $\cat{A}$, and $f$ is a \emph{cofibration} if $u(f)$ is a cofibration in $\cat{A}$.
\end{definition}

According to \cite[Theorem 2.23]{bayeh/hess/karpova/kedziorek/riehl/shipley:2014} it is now sufficient to show that every morphism $g$ in $\cat{A}_c$, which has the right lifting property with respect to all cofibrations, is a weak equivalence. Our basic strategy is to show that weak equivalences are detected by the simplicial enrichment. For this we use the following bar resolutions of the objects in $\cat{A}_c$.

\begin{definition} \label{def:bar}
By Theorem~\ref{thm:simp-c}, $\cat{A}_c$ is locally $\lambda$-presentable for some regular cardinal $\lambda$, and, by increasing $\lambda$ if necessary, we may assume that $\lambda$ also satisfies the conditions of Lemma~\ref{lem:enriched-hom} (and hence also Lemmas~\ref{lem:pres-A_c} and \ref{lem:raptis}). Choose a (small) set $\cat{P}$ of $\lambda$-presentable objects in $\cat{A}_c$ such that every object of $\cat{A}_c$ is a $\lambda$-filtered colimit of objects of $\cat{P}$.

Let $\Hom_c(-,-)$ denote the simplicial enrichment of $\cat{A}_c$. For any $X \in \cat{A}_c$ we then define a simplicial object $\mathfrak{B}_\bullet(X)$ in $\cat{A}_c$ with
\[ \mathfrak{B}_r(X) := \Wdge_{P_0,\dots,P_r \in \cat{P}} P_0 \otimes [\Hom_c(P_0,P_1) \times \dots \times \Hom_c(P_r,X)] \]
where the face maps are given by composition in the simplicial category $\cat{A}_c$ and degeneracy maps are given by the unit maps $* \to \Hom_c(P_i,P_i)$. This simplicial object is naturally augmented by a map
\[ \mathfrak{B}_0(X) \to X \]
which therefore induces a natural transformation with components
\[ \eta_X: |\mathfrak{B}_\bullet(X)| \to X. \]
\end{definition}

Notice that the enriched bar construction $|\mathfrak{B}_\bullet(X)|$ plays the role of the homotopy colimit of the canonical diagram on the overcategory $\cat{P} \downarrow X$. The next result is then the analogue of Dugger's \cite[4.7]{dugger:2001}.

\begin{lemma} \label{lem:bar}
For each $X \in \cat{A}_c$, the map $\eta_X$ is a weak equivalence in $\cat{A}_c$.
\end{lemma}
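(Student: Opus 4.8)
The plan is to reduce the statement to a claim about the underlying objects in $\cat{A}$, where we can transport Dugger's argument. By Definition~\ref{def:A_c-weq}, $\eta_X$ is a weak equivalence in $\cat{A}_c$ precisely when $u(\eta_X)$ is a weak equivalence in $\cat{A}$. Since $u\colon \cat{A}_c \to \cat{A}$ is a left adjoint in a simplicial adjunction, it preserves tensors, wedges (coproducts), and the geometric realization of a simplicial object (a colimit), so $u|\mathfrak{B}_\bullet(X)| \cong |u\mathfrak{B}_\bullet(X)|$, where $u\mathfrak{B}_\bullet(X)$ is the simplicial object of $\cat{A}$ with
\[ u\mathfrak{B}_r(X) \cong \Wdge_{P_0,\dots,P_r \in \cat{P}} uP_0 \otimes [\Hom_c(P_0,P_1) \times \dots \times \Hom_c(P_r,X)]. \]
So it suffices to show this augmented simplicial object in $\cat{A}$ realizes to a weak equivalence over $uX$.

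First I would establish that each object $uP$ for $P \in \cat{P}$ is cofibrant in $\cat{A}$ — indeed it carries a $c$-coalgebra structure, hence is a retract of $c(uP)$ and thus cofibrant — and that the simplicial sets $\Hom_c(P_i, P_{i+1})$ and $\Hom_c(P_r, X)$ serve as the relevant mapping spaces. Next I would invoke the standard fact that, in a simplicial model category, the realization $|\mathfrak{B}_\bullet|$ of a simplicial object whose terms are cofibrant and which is Reedy cofibrant (the wedge-of-tensors-with-products-of-mapping-spaces form is the free/degeneracy-generated simplicial object, hence Reedy cofibrant) computes the homotopy colimit; this identifies $u|\mathfrak{B}_\bullet(X)|$ with $\hocolim_{\cat{P}\downarrow X}(uP)$, the homotopy colimit of the canonical diagram on the overcategory $\cat{P} \downarrow X$ sending $(P \to X)$ to $uP$. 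I would cite the relevant bar-construction/homotopy-colimit comparison (e.g.\ from \cite{riehl:2014}) for this.

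The heart of the argument then runs parallel to Dugger's \cite[4.7]{dugger:2001}: the ordinary colimit of the canonical diagram on $\cat{P}\downarrow X$ is $uX$ (since every object of $\cat{A}_c$ is a $\lambda$-filtered colimit of objects of $\cat{P}$, and $u$ preserves that colimit), and the category $\cat{P}\downarrow X$ is $\lambda$-filtered, so by Lemma~\ref{lem:raptis} the canonical map $\hocolim_{\cat{P}\downarrow X} uP \to \colim_{\cat{P}\downarrow X} uP = uX$ is a weak equivalence. One must check that the map induced on homotopy colimits by the weak equivalence from the bar-construction model to the actual $\hocolim$ over the filtered category is compatible with the augmentation $u(\eta_X)$; this is the standard identification of $|\mathfrak{B}_\bullet|$ with a (fat) geometric realization computing the homotopy colimit.

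The main obstacle I anticipate is the bookkeeping in the middle step: verifying that the enriched bar construction $|\mathfrak{B}_\bullet(X)|$, built from \emph{all} of $\cat{P}$ at every level rather than from a diagram indexed by $\cat{P}\downarrow X$, genuinely computes $\hocolim_{\cat{P}\downarrow X}$ — i.e.\ that summing over all tuples $(P_0,\dots,P_r)$ with the composition face maps reproduces the two-sided bar construction $B(\cat{P}, \cat{P}, X)$ and that its realization is weakly equivalent to $\hocolim$ of the canonical diagram. This is essentially the content of the remark preceding the lemma ("$|\mathfrak{B}_\bullet(X)|$ plays the role of the homotopy colimit of the canonical diagram on $\cat{P}\downarrow X$"), and making it precise requires care with the simplicial enrichment, Reedy cofibrancy of $\mathfrak{B}_\bullet(X)$, and the fact that the terms are cofibrant — but no genuinely new idea beyond the simplicial-model-category toolkit and Lemma~\ref{lem:raptis}.
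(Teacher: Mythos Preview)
Your strategy diverges from the paper's, and the step you flag as ``bookkeeping'' is a genuine gap. The construction $|\mathfrak{B}_\bullet(X)|$ is the \emph{enriched} two-sided bar construction, built from the simplicial mapping spaces $\Hom_c(P_i,P_{i+1})$ and $\Hom_c(P_r,X)$; the homotopy colimit over the ordinary category $\cat{P}\downarrow X$ is the \emph{unenriched} bar construction, built from the hom-\emph{sets} $\cat{A}_c(P_i,P_{i+1})$. These do not agree, and there is no general reason for the comparison map between them to be a weak equivalence. The remark preceding the lemma that $|\mathfrak{B}_\bullet(X)|$ ``plays the role of'' the homotopy colimit is only an analogy, not a claim that the two are weakly equivalent. (There are also smaller issues: with $\cat{P}$ chosen merely as in Definition~\ref{def:bar}, neither the $\lambda$-filteredness of $\cat{P}\downarrow X$ nor the identification $\colim_{\cat{P}\downarrow X} uP \cong uX$ is immediate.)

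The paper avoids any comparison with a homotopy colimit. Its argument is: for $P \in \cat{P}$, the augmented simplicial object $\mathfrak{B}_\bullet(P) \to P$ admits an \emph{extra degeneracy} (via the unit $* \to \Hom_c(P,P)$), so $\eta_P$ is a simplicial homotopy equivalence and hence a weak equivalence. For general $X$, one writes $X \cong \colim_{j \in \cat{J}} P^j$ as a $\lambda$-filtered colimit with $P^j \in \cat{P}$, and then checks that the functor $|\mathfrak{B}_\bullet(-)|$ commutes with this colimit. The only nontrivial point is that each $\Hom_c(P,-)$ preserves the colimit, which follows once one knows that $P \otimes \Delta^n$ is $\lambda$-presentable in $\cat{A}_c$; this is deduced from Lemmas~\ref{lem:enriched-hom} and~\ref{lem:pres-A_c}. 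Thus $u(\eta_X)$ is the $\lambda$-filtered colimit of the weak equivalences $u(\eta_{P^j})$, and Lemma~\ref{lem:raptis} finishes the argument. The missing idea in your proposal is precisely this extra-degeneracy reduction to objects of $\cat{P}$.
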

\begin{proof}
For $P \in \cat{P}$, the augmented simplicial object $\mathfrak{B}_\bullet(P) \to P$ has extra degeneracies given by the unit map $* \to \Hom_c(P,P)$ and so the map $\eta_P$ is a simplicial homotopy equivalence in $\cat{A}_c$. Therefore $u(\eta_P)$ is a simplicial homotopy equivalence, and hence a weak equivalence, in $\cat{A}$. So $\eta_P$ is a weak equivalence in $\cat{A}_c$.

For any $X \in \cat{A}_c$, we can write
\[ X \isom \colim_{j \in \cat{J}} P^j \]
for some $\lambda$-filtered diagram $(P^j)_{j \in \cat{J}}$ in $\cat{A}_c$ where $P^j \in \cat{P}$. We now claim that for any $P \in \cat{P}$, the natural map of simplicial sets
\begin{equation} \label{eq:hom-colim} \colim_{\cat{J}} \Hom_c(P,P^j) \to \Hom_c(P,\colim_{\cat{J}}P^j) \isom \Hom_c(P,X) \end{equation}
is an isomorphism. Since colimits of simplicial sets are formed levelwise, it is sufficient to show that, for each integer $n \geq 0$, the map
\[ \colim_{\cat{J}} \cat{A}_c(P \otimes \Delta^n,P^j) \to \cat{A}_c(P \otimes \Delta^n,\colim_{\cat{J}}P^j) \]
is an isomorphism of sets, i.e. that $P \otimes \Delta^n$ is a $\lambda$-presentable object of $\cat{A}_c$.

Now $u(P)$ is $\lambda$-presentable in $\cat{A}$ by Lemma~\ref{lem:pres-A_c} and so $u(P \otimes \Delta^n) = u(P) \otimes \Delta^n$ is $\lambda$-presentable by condition (3) of Lemma~\ref{lem:enriched-hom}. So by Lemma~\ref{lem:pres-A_c} again, $P \otimes \Delta^n$ is $\lambda$-presentable in $\cat{A}_c$. This establishes the isomorphisms (\ref{eq:hom-colim}).

Since finite products, tensors, coproducts, and geometric realization all commute with $\lambda$-filtered colimits, it follows that the natural map
\[ \colim_{\cat{J}}|\mathfrak{B}_\bullet(P^j)| \to |\mathfrak{B}_\bullet(X)| \]
is an isomorphism. Thus the map $u(\eta_X)$ can be written as the $\lambda$-filtered colimit of the maps $u(\eta_{P^j})$, each of which is a weak equivalence in $\cat{A}$. By Lemma~\ref{lem:raptis}, $u(\eta_X)$ is a weak equivalence in $\cat{A}$, and hence $\eta_X$ is a weak equivalence in $\cat{A}_c$.
\end{proof}

We now use the above bar resolutions to complete the proof of our main theorem.

\begin{theorem} \label{thm:main}
Let $\cat{A}$ be a combinatorial and simplicial model category and $c$ as in Lemma~\ref{lem:simp-c}. Then the weak equivalences and cofibrations of Definition~\ref{def:A_c-weq} make $\cat{A}_c$ into a combinatorial and simplicial model category such that the adjunction $u \dashv c$ is a simplicial Quillen equivalence.
\end{theorem}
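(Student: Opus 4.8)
The plan is to invoke \cite[Theorem 2.23]{bayeh/hess/karpova/kedziorek/riehl/shipley:2014}. Since $\cat{A}_c$ is locally presentable (Theorem~\ref{thm:simp-c}) and the classes of Definition~\ref{def:A_c-weq} are left-induced along $u$ from the combinatorial model structure on $\cat{A}$, that theorem reduces the construction of the model structure to the single \emph{acyclicity condition}: every morphism $g$ of $\cat{A}_c$ with the right lifting property against all cofibrations is a weak equivalence, i.e.\ $u(g)$ is a weak equivalence in $\cat{A}$. The model structure produced this way is cofibrantly generated, hence combinatorial since $\cat{A}_c$ is locally presentable, and it is simplicial: for a cofibration $f$ of $\cat{A}_c$ and a cofibration $j$ of $\sSet$, the functor $u$ preserves tensors and colimits by Theorem~\ref{thm:simp-c}, so it sends the pushout--product of $f$ and $j$ to the pushout--product of $u(f)$ and $j$, which is a cofibration in $\cat{A}$, trivial whenever $f$ or $j$ is; as $u$ detects cofibrations and weak equivalences, the pushout--product axiom holds in $\cat{A}_c$. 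Finally $u \dashv c$ is a Quillen adjunction because $u$ preserves cofibrations and trivial cofibrations by definition, and it is simplicial by Theorem~\ref{thm:simp-c}.

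To verify the acyclicity condition, fix $g\colon X \to Y$ in $\cat{A}_c$ having the right lifting property against all cofibrations. Each $P \in \cat{P}$ carries a $c$-coalgebra structure, so $uP$ is cofibrant in $\cat{A}$, as noted in the introduction. By the pushout--product axiom in $\cat{A}$ the map $uP \otimes \partial\Delta^n \to uP \otimes \Delta^n$ is then a cofibration, and since $u$ preserves tensors and detects cofibrations, $P \otimes \partial\Delta^n \to P \otimes \Delta^n$ is a cofibration of $\cat{A}_c$ for every $n \geq 0$. Thus $g$ lifts against it, which by the simplicial adjunction says exactly that $\Hom_c(P,g)\colon \Hom_c(P,X) \to \Hom_c(P,Y)$ is a trivial fibration of simplicial sets, for every $P \in \cat{P}$.

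The heart of the proof is to deduce from this that $u$ sends the induced map $\mathfrak{B}_\bullet(g)\colon \mathfrak{B}_\bullet(X) \to \mathfrak{B}_\bullet(Y)$ to a levelwise weak equivalence of simplicial objects in $\cat{A}$, and that geometric realization preserves this. In simplicial degree $r$ the map $u\mathfrak{B}_r(g)$ is a coproduct over strings $P_0,\dots,P_r$ in $\cat{P}$ of the morphisms
\[ uP_0 \otimes \big(\Hom_c(P_0,P_1) \times \cdots \times \Hom_c(P_{r-1},P_r) \times \Hom_c(P_r,g)\big). \]
Trivial fibrations of simplicial sets are stable under pullback, so $\mathrm{id} \times \Hom_c(P_r,g)$ is again a trivial fibration, in particular a weak equivalence between (automatically cofibrant) simplicial sets; tensoring with the cofibrant object $uP_0$ is a left Quillen functor $\sSet \to \cat{A}$, so the displayed map is a weak equivalence in $\cat{A}$ by Ken Brown's lemma, and a coproduct of weak equivalences between cofibrant objects is again one. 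Hence $u\mathfrak{B}_r(g)$ is a weak equivalence for every $r$. Now $u\mathfrak{B}_\bullet(X)$ and $u\mathfrak{B}_\bullet(Y)$ are Reedy cofibrant---the standard property of a two-sided bar construction built from tensors of cofibrant objects with (cofibrant) simplicial sets---and geometric realization $\cat{A}^{\Delta^{\mathrm{op}}} \to \cat{A}$ is left Quillen for the Reedy model structure on the simplicial model category $\cat{A}$; since its weak equivalences are precisely the levelwise ones, Ken Brown's lemma gives that $|u\mathfrak{B}_\bullet(g)|$ is a weak equivalence in $\cat{A}$. Because $u$ preserves tensors and colimits this map is $u|\mathfrak{B}_\bullet(g)|$, so $|\mathfrak{B}_\bullet(g)|$ is a weak equivalence in $\cat{A}_c$.

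By naturality of the augmentation we have $\eta_Y \circ |\mathfrak{B}_\bullet(g)| = g \circ \eta_X$, and $\eta_X$, $\eta_Y$ are weak equivalences by Lemma~\ref{lem:bar}; since the class of Definition~\ref{def:A_c-weq} inherits two-out-of-three from $\cat{A}$, it follows that $g$ is a weak equivalence. This establishes the acyclicity condition and hence the model structure. That $u \dashv c$ is moreover a Quillen equivalence follows from the standard recognition criterion: $u$ reflects all weak equivalences by Definition~\ref{def:A_c-weq}; every object of $\cat{A}_c$ is cofibrant (its underlying object is algebraically cofibrant, and $u$ preserves the initial object); and for a fibrant object $A$ of $\cat{A}$ the derived counit is the cofibrant replacement map $u(cA) = c(A) \to A$ supplied by the comonad, which is a trivial fibration and in particular a weak equivalence. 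I expect the principal obstacle to be the passage from a levelwise weak equivalence of bar constructions to a weak equivalence after realization---that is, verifying their Reedy cofibrancy and that realization is suitably homotopical---the remaining steps being an assembly of the preservation properties of $u$ (Theorem~\ref{thm:simp-c}), the bar resolution (Lemma~\ref{lem:bar}), and the presentability bookkeeping of Section~1.
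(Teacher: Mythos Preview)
Your proposal is correct and follows essentially the same route as the paper: invoke \cite[2.23]{bayeh/hess/karpova/kedziorek/riehl/shipley:2014}, verify the acyclicity condition by showing $\Hom_c(P,g)$ is a trivial fibration, propagate this through the bar construction using Reedy cofibrancy and Lemma~\ref{lem:bar}, and conclude by two-out-of-three. The only cosmetic differences are that the paper lifts against an arbitrary monomorphism $K \hookrightarrow L$ of simplicial sets (rather than just $\partial\Delta^n \hookrightarrow \Delta^n$) and states the trivial-fibration claim for all $P \in \cat{A}_c$ (rather than just $P \in \cat{P}$), and that you spell out the pushout--product verification of the simplicial axiom and use the ``$u$ reflects weak equivalences and the derived counit is a weak equivalence'' form of the Quillen equivalence criterion where the paper uses the adjoint-map form; these are equivalent.
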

\begin{proof}
To get the model structure on $\cat{A}_c$ we apply \cite[2.23]{bayeh/hess/karpova/kedziorek/riehl/shipley:2014}. We have already shown in Theorem~\ref{thm:simp-c} that $u \dashv c$ is an adjunction between locally presentable categories.

Now let $g: X \to Y$ be a morphism in $\cat{A}_c$ that has the right lifting property with respect to all cofibrations. We claim first that for any $P \in \cat{A}_c$, $g$ induces a weak equivalence of simplicial sets
\[ g_*: \Hom_c(P,X) \to \Hom_c(P,Y). \]
In fact, we show that $g_*$ is a trivial fibration of simplicial sets. To see this, consider a lifting problem
\[ \begin{diagram}
     \node{K} \arrow{e} \arrow{s,V} \node{\Hom_c(P,X)} \arrow{s,r}{g_*} \\
     \node{L} \arrow{e} \arrow{ne,..} \node{\Hom_c(P,Y)}
\end{diagram} \]
where $K \cof L$ is a cofibration of simplicial sets. By adjointness, a lift here corresponds to a lift of the diagram
\[ \begin{diagram}
     \node{P \otimes K} \arrow{e} \arrow{s} \node{X} \arrow{s,r}{g} \\
     \node{P \otimes L} \arrow{e} \arrow{ne,..} \node{Y}
\end{diagram} \]
so it is sufficient to show that $P \otimes K \to P \otimes L$ is a cofibration in $\cat{A}_c$. By definition, this is the case if the induced map $u(P \otimes K) \to u(P \otimes L)$ is a cofibration in $\cat{A}$. But tensors are preserved by the left adjoint $u$ and the map
\[ u(P) \otimes K \to u(P) \otimes L \]
is a cofibration in the simplicial model category $\cat{A}$ because $u(P)$ is cofibrant for any $P \in \cat{A}_c$. This establishes that $g_*$ is an trivial fibration, and hence a weak equivalence, of simplicial sets.

Now consider the following diagram in $\cat{A}_c$ where $\mathfrak{B}_\bullet(-)$ is as in Definition~\ref{def:bar}:
\begin{equation} \label{eq:diag-bar} \begin{diagram}
  \node{|\mathfrak{B}_\bullet(X)|} \arrow{e,t}{\eta_X} \arrow{s,l}{|\mathfrak{B}_\bullet(g)|} \node{X} \arrow{s,r}{g} \\
  \node{|\mathfrak{B}_\bullet(Y)|} \arrow{e,t}{\eta_Y} \node{Y}
\end{diagram} \end{equation}
We know that the maps $\eta_X$ and $\eta_Y$ are weak equivalences by Lemma~\ref{lem:bar}. We claim that the map labelled $|\mathfrak{B}_\bullet(g)|$ is also a weak equivalence.

For any objects $P_0,\dots,P_r \in \cat{P}$, the map $g$ induces weak equivalences of simplicial sets
\[ \Hom_c(P_0,P_1) \times \dots \times \Hom_c(P_r,X) \to \Hom_c(P_0,P_1) \times \dots \times \Hom_c(P_r,Y) \]
by our previous calculation. Therefore, since $u(P_0)$ is cofibrant and $\cat{A}$ is a simplicial model category, the induced maps
\[ u(P_0) \otimes [\Hom_c(P_0,P_1) \times \dots \times \Hom_c(P_r,X)] \to u(P_0) \otimes [\Hom_c(P_0,P_1) \times \dots \times \Hom_c(P_r,Y)] \]
are weak equivalences between cofibrant objects in $\cat{A}$. Taking coproducts preserves weak equivalences between cofibrant objects, and $u$ commutes with tensors and coproducts, so $g$ induces weak equivalences
\[ u(\mathfrak{B}_r(X)) \to u(\mathfrak{B}_r(Y)) \]
for each $r$. Since the simplicial objects $u(\mathfrak{B}_\bullet(-))$ are Reedy cofibrant, it follows that the induced map on geometric realizations is a weak equivalence too. This implies $|\mathfrak{B}_\bullet(g)|$ is a weak equivalence in $\cat{A}_c$.

Finally, we note that the Definition~\ref{def:A_c-weq} implies that weak equivalences in $\cat{A}_c$ have the 2-out-of-3 property, and so we deduce from (\ref{eq:diag-bar}) that $g$ is a weak equivalence.

This now establishes, using \cite[2.23]{bayeh/hess/karpova/kedziorek/riehl/shipley:2014}, that $\cat{A}_c$ has the desired model structure and it follows immediately that $u \dashv c$ is a simplicial Quillen adjunction; see \cite[Lemma 2.23]{bayeh/hess/karpova/kedziorek/riehl/shipley:2014}. But a morphism $g: X \to c(A)$ in $\cat{A}_c$ is a weak equivalence if and only if the map $u(g): u(X) \to uc(A)$ is a weak equivalence in $\cat{A}$. Since the counit map $uc(A) \to A$ is always a weak equivalence ($uc$ is our original cofibrant replacement functor) it follows that $u(g)$ is a weak equivalence if and only if the adjoint map $g^{\#} : u(X) \to A$ is a weak equivalence. Hence $u \dashv c$ is a Quillen equivalence.
\end{proof}

\section{Coalgebraic models for presentable model categories}

\begin{definition}
We say that a model category is \emph{presentable} if it is Quillen equivalent (via a zigzag of Quillen equivalences) to some combinatorial model category. Note that very many familiar model categories are presentable. For example, if one accepts a certain set-theoretic axiom about the existence of large cardinals (Vop\v{e}nka's Principle), then every cofibrantly generated model category is presentable by \cite{raptis:2009}.
\end{definition}

The following theorem follows from \cite[1.1]{dugger:2001} using \cite[6.5]{dugger:2001b}.

\begin{theorem}[Dugger] \label{thm:dugger}
For every presentable model category $\cat{M}$ there is a Quillen equivalence:
\[ l : \cat{A} \rightleftarrows \cat{M} : r \qquad l \dashv r \]
where $\cat{A}$ is a combinatorial and simplicial model category. Moreover, if $\cat{M}$ is also a simplicial model category then the adjunction $l \dashv r$ may also be chosen to be simplicial.
\end{theorem}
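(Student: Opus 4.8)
The plan is to deduce the theorem from Dugger's presentation theorem for combinatorial model categories, using that the homotopy-theoretic input of that theorem is invariant under Quillen equivalence.

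The key special case is the one where $\cat{M}$ is combinatorial. There \cite[Theorem 1.1]{dugger:2001} produces a small category $\cat{C}$, a set $S$ of morphisms in the projective model structure on the category $U\cat{C}$ of simplicial presheaves on $\cat{C}$, and a Quillen equivalence $l : U\cat{C}/S \rightleftarrows \cat{M} : r$, where $U\cat{C}/S$ denotes the left Bousfield localization of $U\cat{C}$ at $S$. Setting $\cat{A} := U\cat{C}/S$, the category $\cat{A}$ is combinatorial, being a left Bousfield localization at a set of maps of the combinatorial model category $U\cat{C}$, and it is simplicial, since the projective model structure on simplicial presheaves is a simplicial model category and left Bousfield localization at a set of maps preserves the simplicial model structure.

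For the general case, recall that $\cat{M}$ is joined to a combinatorial model category $\cat{B}$ by a zigzag of Quillen equivalences, so that $\Ho(\cat{M})$ is equivalent to $\Ho(\cat{B})$ --- compatibly with homotopy colimits --- and therefore inherits from $\cat{B}$ the property of being generated under homotopy colimits by (the images of) a set of objects. One then runs Dugger's construction directly on $\cat{M}$: choose a set of objects of $\cat{M}$ whose cofibrant replacements homotopically generate, let $\cat{C}$ be the full (framed, or, in the simplicial case below, honestly simplicial) subcategory of $\cat{M}$ that they span, and left Bousfield localize $U\cat{C}$ at a set $S$ of maps recording the homotopical relations among the generators; the induced adjunction $l : U\cat{C}/S \rightleftarrows \cat{M} : r$ is a Quillen equivalence, exactly as in the combinatorial case, with $U\cat{C}/S$ combinatorial and simplicial as above. (Alternatively one may take the presentation $U\cat{C}/S$ of $\cat{B}$ and transport it along the zigzag, applying the left adjoint at each step where the zigzag points one way and the right adjoint where it points the other, to obtain a functor $\cat{C} \to \cat{M}$, and then re-localize.) This proves the first assertion.

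For the ``moreover'' clause, if $\cat{M}$ is itself a simplicial model category I would take the subcategory $\cat{C} \subseteq \cat{M}$ above to be a genuine full simplicial subcategory, so that $l$ is the simplicially enriched left Kan extension along $\cat{C} \hookrightarrow \cat{M}$; then $l \dashv r$ is a simplicial Quillen adjunction by \cite[Theorem 6.5]{dugger:2001b}, and this is unaffected by localizing at $S$. The step I expect to be the main obstacle is the verification that $U\cat{C}/S \rightleftarrows \cat{M}$ is a Quillen \emph{equivalence} --- that a set of homotopy generators really does present all of $\cat{M}$, and that the attendant smallness (of $\cat{C}$ and of the set $S$) is available for a merely presentable, rather than combinatorial, $\cat{M}$. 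This is precisely the content of Dugger's argument, and it is where the reduction to the combinatorial case through the zigzag is doing its work.
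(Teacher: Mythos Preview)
Your outline has the right shape, but there is a structural point you are missing and a concrete technical divergence from the paper's argument in the simplicial case.

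First, the non-simplicial statement \emph{is} Dugger's theorem, and the paper simply cites it. Your worry about ``the main obstacle'' --- that a merely presentable $\cat{M}$ might not support running the presentation construction directly --- is exactly what Dugger already handles, and the mechanism is the lifting lemma \cite[5.10]{dugger:2001b}: given Quillen equivalences $U\mathcal{C}/S \rightleftarrows \cat{N}$ and $\cat{M} \rightleftarrows \cat{N}$ (left adjoints on top), there is a lifted single Quillen equivalence $U\mathcal{C}/S \rightleftarrows \cat{M}$. This is what collapses the zigzag; you never need to rerun the presentation construction on $\cat{M}$ itself, nor to transport generators along the zigzag by hand as you suggest parenthetically.

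Second, and more substantively, your approach to the ``moreover'' clause diverges from the paper's and carries a gap. You propose to take $\cat{C} \subseteq \cat{M}$ as a full \emph{simplicial} subcategory and form the simplicially enriched left Kan extension. But Dugger's framework --- in particular the lifting lemma just cited and the extension result \cite[2.3]{dugger:2001b} --- is set up for an \emph{ordinary} small category $\cat{C}$, with $U\cat{C}$ the category of ordinary simplicial presheaves; replacing $\cat{C}$ by a simplicial category changes the presheaf category and would force you to re-establish the presentation and lifting machinery in the enriched setting, which you have not done. The paper's argument is more economical: it keeps $\cat{C}$ ordinary and instead shows that the Quillen adjunction $\operatorname{Re} \dashv \operatorname{Sing}$ of \cite[2.3]{dugger:2001b} can itself be made simplicial whenever $\cat{M}$ is, by defining $\operatorname{Re}(F) = F \otimes_{\mathcal{C}} \gamma^{\mathsf{cof}}$ via the simplicial tensoring of $\cat{M}$ and $\operatorname{Sing}(M)(c) = \Hom_{\cat{M}}(\gamma^{\mathsf{cof}}(c),M)$ via the enrichment. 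Crucially, because the lifting lemma is what upgrades this adjunction to an equivalence, one only has to check that $\operatorname{Re} \dashv \operatorname{Sing}$ is a simplicial Quillen \emph{adjunction}; the equivalence comes for free, so the obstacle you flag at the end does not arise.
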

\begin{proof}
Dugger does not explicitly state the claim about what happens when $\cat{M}$ is simplicial but it follows from his proof. The key step in his argument is \cite[5.10]{dugger:2001b}, which says that given Quillen equivalences (with left adjoints on top)
\[ U\mathcal{C}/S \rightleftarrows \cat{N} \quad \text{and} \quad \cat{M} \rightleftarrows \cat{N}, \]
where $U\mathcal{C}/S$ is a left Bousfield localization of the projective model structure on the category $U\mathcal{C}$ of simplicial presheaves on a small category $\mathcal{C}$, there is a `lifted' single Quillen equivalence
\[ U\mathcal{C}/S \rightleftarrows \cat{M}. \]
We claim that when $\cat{M}$ is a simplicial model category, this last adjunction can be taken to be simplicial.

Dugger constructs the lifted Quillen equivalence using \cite[2.3]{dugger:2001b} which says that any functor $\gamma: \mathcal{C} \to \cat{M}$ can be extended (up to weak equivalence) to the left functor in a Quillen adjunction
\[ \operatorname{Re}: U\mathcal{C} \rightleftarrows \cat{M} : \operatorname{Sing}. \]
We need to show that when $\cat{M}$ is a simplicial model category this adjunction can be chosen to be simplicial. To see that this is the case, we choose an objectwise cofibrant replacement $\gamma^{\mathsf{cof}}$ for $\gamma$, and define $\operatorname{Re}$ on a presheaf $F: \mathcal{C}^{op} \to \sSet$ by the coend
\[ \operatorname{Re}(F) = F \otimes_{\mathcal{C}} \gamma^{\mathsf{cof}} := \operatorname{coeq} \left( \coprod_{a \to b \in \mathcal{C}} F(b) \otimes \gamma^{\mathsf{cof}}(a) \rightrightarrows \coprod_{c \in \mathcal{C}} F(c) \otimes \gamma^{\mathsf{cof}}(c) \right) \]
where $\otimes$ denotes the tensoring of $\cat{M}$ by simplicial sets, and $\operatorname{Sing}$ on $M \in \cat{M}$ by
\[ \operatorname{Sing}(M)(c) := \Hom_{\cat{M}}(\gamma^{\mathsf{cof}}(c),M) \]
where $\Hom_{\cat{M}}(-,-)$ denotes the simplicial enrichment of $\cat{M}$. The right adjoint $\operatorname{Sing}$ preserves fibrations and trivial fibrations, so $\operatorname{Re} \dashv \operatorname{Sing}$ is a Quillen adjunction, and the value of $\operatorname{Re}$ on the representable (discrete) presheaf $\mathcal{C}(-,c)$ is $\gamma^{\mathsf{cof}}(c)$. Since tensoring with simplicial sets is objectwise in $U\mathcal{C}$, it is easy to check that $\operatorname{Re} \dashv \operatorname{Sing}$ is also a simplicial adjunction.
\end{proof}

\begin{remark}
It follows from Dugger's approach that the category $\cat{A}$ in Theorem~\ref{thm:dugger} can be taken to be of the form $U\mathcal{C}/S$ (a left Bousfield localization of simplicial presheaves on a small category $\mathcal{C}$). It is not hard to see that, for such $\cat{A}$, any infinite cardinal $\lambda$ satisfies the conditions of Lemma~\ref{lem:enriched-hom}.
\end{remark}

\begin{corollary}
For any presentable model category $\cat{C}$ there is a Quillen equivalence:
\[ l : \cat{A}_c \rightleftarrows \cat{C} : r \qquad l \dashv r \]
where $\cat{A}_c$ is a combinatorial and simplicial model category in which every object is cofibrant. Moreover, if $\cat{C}$ is also a simplicial model category then the adjunction $l \dashv r$ may be chosen to be simplicial.
\end{corollary}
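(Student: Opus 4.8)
The plan is to obtain the stated Quillen equivalence by composing the two Quillen equivalences already at our disposal. First I would apply Dugger's Theorem~\ref{thm:dugger} to the presentable model category $\cat{C}$, producing a Quillen equivalence $l' : \cat{A} \rightleftarrows \cat{C} : r'$ with $l' \dashv r'$ in which $\cat{A}$ is combinatorial and simplicial; when $\cat{C}$ is moreover simplicial, Theorem~\ref{thm:dugger} lets us choose this adjunction to be simplicial. Next, fixing a cofibrant replacement comonad $c$ on $\cat{A}$ as in Lemma~\ref{lem:simp-c}, Theorem~\ref{thm:main} equips $\cat{A}_c$ with a combinatorial and simplicial model structure for which the forgetful/cofree adjunction $u : \cat{A}_c \rightleftarrows \cat{A} : c$ is a simplicial Quillen equivalence.

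I would then set $l := l' \circ u$ and $r := c \circ r'$. Since $u$ and $l'$ are left adjoints, so is $l$, with right adjoint $r$; since both $u$ and $l'$ are left Quillen and each adjunction is a Quillen equivalence, the composite $l \dashv r : \cat{A}_c \rightleftarrows \cat{C}$ is again a Quillen equivalence. If $\cat{C}$ is simplicial, then both $u \dashv c$ and $l' \dashv r'$ are simplicial adjunctions, and a composite of simplicial adjunctions is simplicial, which gives the final clause. The category $\cat{A}_c$ is combinatorial and simplicial by Theorem~\ref{thm:main} (which also invokes Theorem~\ref{thm:simp-c}).

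It remains to observe that every object of $\cat{A}_c$ is cofibrant. Because $u$ is a left adjoint it preserves the initial object, so $u$ sends the map $\emptyset_{\cat{A}_c} \to X$ to $\emptyset_{\cat{A}} \to uX$; the latter is a cofibration in $\cat{A}$ since the underlying object of any $c$-coalgebra is cofibrant (as noted in the introduction, a $c$-coalgebra is a retract of its cofibrant replacement), and hence $\emptyset_{\cat{A}_c} \to X$ is a cofibration in $\cat{A}_c$ by Definition~\ref{def:A_c-weq}. Thus $X$ is cofibrant.

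I do not expect any serious obstacle: the corollary is essentially a formal consequence of Theorems~\ref{thm:dugger} and \ref{thm:main} together with the elementary facts that left Quillen functors, simplicial functors, and Quillen equivalences all compose. The only point requiring a moment's care is the cofibrancy statement, which hinges on the observations that the left adjoint $u$ preserves initial objects and that the left-induced cofibrations in $\cat{A}_c$ are precisely those detected by $u$.
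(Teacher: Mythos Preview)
Your proposal is correct and follows exactly the paper's approach: the paper's proof is the single sentence ``Compose the Quillen equivalences of Theorems~\ref{thm:main} and \ref{thm:dugger},'' and you have simply unpacked this composition with appropriate care, including the verification that every object of $\cat{A}_c$ is cofibrant (which the paper leaves implicit from the introduction).
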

\begin{proof}
Compose the Quillen equivalences of Theorems~\ref{thm:main} and \ref{thm:dugger}.
\end{proof}

\appendix
\section{Local presentability for coalgebras over accessible comonads}

We say that an endofunctor $c\colon \cat{A} \to \cat{A}$ on a locally presentable category is \emph{accessible} if there is some regular cardinal $\lambda$ such that $\cat{A}$ is locally $\lambda$-presentable and $c$ preserves $\lambda$-filtered colimits. In this section, we prove the following result:

\begin{prop} \label{prop:A_c-locally-presentable}
Let $\cat{A}$ be a locally presentable category and $c: \cat{A} \to \cat{A}$ an accessible comonad. Then the category $\cat{A}_c$ of $c$-coalgebras is locally presentable.
\end{prop}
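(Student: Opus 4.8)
The plan is to realize $\cat{A}_c$ as a 2-categorical limit (in fact an inserter-type construction, or equivalently an equifier / lax-limit cone) built from $\cat{A}$ and the functor $c$, and then invoke the closure of the 2-category of accessible categories and accessible functors under such limits. Concretely, a $c$-coalgebra is an object $A \in \cat{A}$ together with a map $a\colon A \to cA$ satisfying the counit and coassociativity axioms. The data $(A, a)$ with $a$ an arbitrary morphism is the \emph{inserter} of the pair of functors $\mathrm{id}_{\cat{A}}, c\colon \cat{A} \rightrightarrows \cat{A}$; imposing the two coalgebra axioms is then an \emph{equifier} of suitable pairs of 2-cells between functors out of that inserter. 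So $\cat{A}_c$ is obtained from $\cat{A}$ by one inserter followed by two equifiers.

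The key steps, in order, are: (1) recall from Lemma~\ref{lem:enriched-hom}-style reasoning — or rather from the definition of accessible comonad — that $\cat{A}$ is locally $\lambda$-presentable and $c$ preserves $\lambda$-filtered colimits for some regular $\lambda$; (2) cite the theorem of Makkai--Rosick\'y \cite{makkai/rosicky:2013} (or the classical Bird / Adámek--Rosický results on accessible categories, \cite{adamek/rosicky:1994}) that the 2-category $\lACC$ of accessible categories, accessible functors, and natural transformations is closed under PIE-limits — products, inserters, and equifiers — and that moreover such limits of \emph{locally presentable} categories along \emph{cocontinuous-on-a-tail} functors remain locally presentable; (3) check that $\mathrm{id}_{\cat{A}}$ and $c$ are accessible functors (the former trivially, the latter by hypothesis), so the inserter $\cat{I} := \mathrm{Ins}(\mathrm{id}, c)$ is accessible, and is in fact locally presentable since it is the category of objects-with-a-structure-map and its forgetful functor to $\cat{A}$ creates colimits; (4) identify the two functors $\cat{I} \rightrightarrows \cat{A}$ (or $\cat{I} \rightrightarrows \cat{I}'$) whose comparison 2-cells one equifies to enforce counit and coassociativity, verify they are accessible, and conclude that the successive equifiers $\cat{A}_c$ are accessible; (5) finally upgrade "accessible" to "locally presentable" using that the forgetful functor $u\colon \cat{A}_c \to \cat{A}$ creates all colimits (so $\cat{A}_c$ is cocomplete), and an accessible cocomplete category is locally presentable \cite[2.47]{adamek/rosicky:1994}.

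The main obstacle — and the reason a leisurely appendix treatment is warranted — is step (2)/(4): one must set up the inserter and equifiers with enough care that the relevant comparison functors are genuinely accessible and that the Makkai--Rosický machinery applies on the nose. In particular, the precise statement needed is that these limits, computed in $\mathrm{CAT}$, are again accessible with the \emph{same} $\lambda$ (after possibly enlarging $\lambda$), and that the forgetful functor down to $\cat{A}$ is itself $\lambda$-accessible; this is exactly the content one extracts from \cite{makkai/rosicky:2013}, and spelling out which 2-cells get equified for the comonad axioms is the bookkeeping that makes the argument honest. An alternative, slightly lower-tech route — which I would mention as a remark — is to bound the presentability rank directly: using that $c$ preserves $\lambda$-filtered colimits and $u$ creates them, show that for $\kappa$ chosen so that $cP$ is $\kappa$-presentable whenever $P$ is $\lambda$-presentable in $\cat{A}$, every $c$-coalgebra is a $\kappa$-filtered colimit of coalgebras with $\kappa$-presentable underlying object (cf. the proof of Lemma~\ref{lem:pres-A_c}); but this requires a direct construction of a dense generator in $\cat{A}_c$, which is fiddlier than quoting the 2-limit closure theorem, so the 2-categorical argument is the cleaner one to write down.
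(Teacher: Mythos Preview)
Your proposal is correct and follows essentially the same approach as the paper: realize $\cat{A}_c$ as an inserter of $(\mathrm{id}_{\cat{A}},c)$ followed by two equifiers enforcing the counit and coassociativity axioms, invoke PIE-limit closure of accessible categories to conclude $\cat{A}_c$ is accessible, and then use cocompleteness plus \cite[2.47]{adamek/rosicky:1994} to upgrade to local presentability. One small correction: the PIE-limit closure result you need is Makkai--Par\'e \cite[5.1.6]{makkai/pare:1989} (or its treatment in \cite{adamek/rosicky:1994}), not Makkai--Rosick\'y \cite{makkai/rosicky:2013}, which concerns weak factorization systems; also, the paper explicitly disclaims control over the index of accessibility of $\cat{A}_c$, so your remark about ``the same $\lambda$'' should be dropped.
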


Note, however, that we do not have control over the `index of accessibility' of $\cat{A}_c$. In particular, we do not claim that $\cat{A}_c$ is locally $\lambda$-presentable for the same cardinal $\lambda$ as above.

Proposition~\ref{prop:A_c-locally-presentable} is dual to \cite[2.78]{adamek/rosicky:1994} which says that the category of algebras over an accessible monad is locally presentable. Our proof is basically the same but we give a detailed outline of the proof for the reader who is not overly familiar with these notions.

First note that, by the following result, it is sufficient to show that the category $\cat{A}_c$ is \emph{accessible}. (This means that there is some regular cardinal $\mu$ such that $\cat{A}_c$ has $\mu$-filtered colimits and such that there is a set of $\mu$-presentable objects in $\cat{A}_c$ that generates the category under $\mu$-filtered colimits.)

\begin{thm}[{Ad\'{a}mek-Rosick\'{y} \cite[2.47]{adamek/rosicky:1994}}]\label{thm:TFAE} The following are equivalent for a category $\cat{C}$:
\begin{itemize}
 \item $\cat{C}$ is accessible and complete,
 \item $\cat{C}$ is accessible and cocomplete,
 \item $\cat{C}$ is locally presentable.
 \end{itemize}
\end{thm}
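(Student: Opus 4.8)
The plan is to dispose of the purely definitional equivalence and then isolate the single construction that powers the two substantive implications. With the definitions of this paper, ``accessible and cocomplete'' coincides with ``locally presentable'': a category is locally presentable exactly when it is cocomplete and $\lambda$-accessible for some regular cardinal $\lambda$, and accessibility means $\lambda$-accessibility for some such $\lambda$ (cocompleteness subsumes the existence of $\lambda$-filtered colimits). So the only genuine content is the pair of implications ``locally presentable $\Rightarrow$ complete'' and ``accessible and complete $\Rightarrow$ cocomplete'', which together with the definitional equivalence close up into a cycle relating all three statements.

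The common tool is the restricted Yoneda embedding. Assuming $\cat{C}$ is accessible, I would fix a regular cardinal $\lambda$ for which $\cat{C}$ is $\lambda$-accessible and let $\cat{A}$ be the full subcategory of $\lambda$-presentable objects, which is essentially small by accessibility \cite{adamek/rosicky:1994}. Define $E \colon \cat{C} \to [\cat{A}^{\mathrm{op}}, \Set]$ by $E(C) = \cat{C}(-, C)|_{\cat{A}}$. I would verify three properties of $E$: it is fully faithful, because every object of $\cat{C}$ is the canonical $\lambda$-filtered colimit of $\lambda$-presentable objects, so that $\cat{A}$ is dense in $\cat{C}$; it preserves every limit that exists in $\cat{C}$, since hom-functors preserve limits and limits of presheaves are computed pointwise; and it preserves $\lambda$-filtered colimits, by definition of $\lambda$-presentability. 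The strategy is then to produce, in each case, a left adjoint $F \dashv E$. Because $E$ is fully faithful, such an adjoint exhibits $\cat{C}$ as a reflective subcategory of $[\cat{A}^{\mathrm{op}}, \Set]$, which is both complete and cocomplete; a reflective subcategory inherits completeness (limits are created by the fully faithful right adjoint) and cocompleteness (colimits are obtained by applying the reflector to colimits in the ambient category). Either way $\cat{C}$ becomes bicomplete, and since it is assumed accessible it is therefore locally presentable.

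The two implications differ only in how the reflector $F$ is obtained. If $\cat{C}$ is cocomplete, then $F$ is the left Kan extension along the Yoneda embedding of the inclusion $\cat{A} \hookrightarrow \cat{C}$: the presheaf category is the free cocompletion of $\cat{A}$, so this Kan extension exists and is automatically left adjoint to the nerve functor $E$, proving that cocomplete accessible (hence locally presentable) categories are complete. If instead $\cat{C}$ is complete, I would invoke the General Adjoint Functor Theorem: $E$ preserves all small limits by the above, so it remains to verify the solution-set condition. This is exactly where accessibility is used in an essential way rather than a merely size-theoretic one, since an accessible functor between accessible categories satisfies the solution-set condition \cite{adamek/rosicky:1994}, and $E$ is accessible as it preserves $\lambda$-filtered colimits between accessible categories. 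The resulting left adjoint makes $\cat{C}$ reflective, hence cocomplete.

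I expect the solution-set condition for $E$ to be the main obstacle, and the point at which the accessibility hypothesis does real work beyond the essential smallness of $\cat{A}$: one must bound, uniformly in a presheaf $P$, the objects $C$ through which maps $P \to E(C)$ factor, which rests on the structure theory of accessible functors and on raising the index of accessibility so that $E$ is $\lambda'$-accessible for a common $\lambda'$. The remaining ingredients, namely essential smallness of the $\lambda$-presentable objects, density yielding full faithfulness of $E$, and the inheritance of (co)completeness by reflective subcategories, are standard and I would treat them briefly, citing \cite{adamek/rosicky:1994}.
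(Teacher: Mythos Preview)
The paper does not give its own proof of this theorem: it is stated with a citation to Ad\'amek--Rosick\'y \cite[2.47]{adamek/rosicky:1994} and used as a black box in the appendix. So there is nothing in the paper to compare your argument against.

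That said, your proposal is a correct outline and in fact tracks the standard proof in the cited reference: the definitional identification of ``locally presentable'' with ``accessible and cocomplete'', the restricted Yoneda embedding $E\colon \cat{C}\to[\cat{A}^{\mathrm{op}},\Set]$ into presheaves on a small dense subcategory of $\lambda$-presentables, and the two ways of producing a left adjoint to $E$ (left Kan extension when $\cat{C}$ is cocomplete, GAFT via the solution-set condition for accessible functors when $\cat{C}$ is complete) to exhibit $\cat{C}$ as reflective in a bicomplete category. Your identification of the solution-set step as the place where accessibility does the real work is accurate; this is precisely \cite[2.45]{adamek/rosicky:1994}. One small caution: the claim that $E$ is fully faithful uses that $\cat{A}$ is dense in $\cat{C}$, which for accessible categories is \cite[2.8]{adamek/rosicky:1994} and is worth citing explicitly rather than treating as immediate from the colimit description.
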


Any category of coalgebras inherits the colimits present in its underlying category. In particular, $\cat{A}_c$ is cocomplete, so Theorem \ref{thm:TFAE} implies that if $\cat{A}_c$ is accessible then it is locally presentable (and, in particular, complete).

The proof that $\cat{A}_c$ is accessible makes use of a class of 2-categorical limits called \emph{PIE-limits}, which we describe in Definition~\ref{def:PIE} below. By a theorem of Makkai and Par\'{e}, the 2-category $\twocat{ACC}$ of accessible categories, accessible functors, and natural transformations has all PIE-limits, created by the forgetful functor $\twocat{ACC} \to \twocat{CAT}$ \cite[5.1.6]{makkai/pare:1989}. This tells us that any category constructed from a diagram of accessible categories and accessible functors, via PIE-limits, is accessible. In particular, we prove Proposition~\ref{prop:A_c-locally-presentable} by showing that the category $\cat{A}_c$ can be constructed from $\cat{A}$ and $c$ by a sequence of PIE-limits.

A diagram consisting of categories, functors, and natural transformations can be encoded by a 2-functor into the 2-category $\twocat{CAT}$. A \emph{2-limit} of such a 2-functor is a category equipped with a universal `cone' over the diagram, where the cone can have a variety of shapes involving functors and natural transformations. The particular 2-limits that we use here are elementary, and we can give an explicit construction.

 \begin{definition}\label{def:PIE}
 A \emph{PIE-limit} is a 2-limit that can be constructed as a composite of the following 2-limits:
 \begin{itemize}
  \item The \emph{product} of two categories $\cat{X}$ and $\cat{Y}$ is the usual cartesian product. Note that it satisfies an enriched universal property: for any category $\cat{A}$ the map induced by the projections defines an isomorphism of functor categories \[ \twocat{CAT}(\cat{A},\cat{X} \times \cat{Y}) \cong \twocat{CAT}(\cat{A},\cat{X}) \times \twocat{CAT}(\cat{A},\cat{Y}).\]
  \item The \emph{inserter} for a parallel pair of functors $f,g \colon \cat{X} \to \cat{Y}$ is a category $\cat{I}$ together with a functor $i \colon \cat{I} \to \cat{X}$ and a natural transformation
  \[ \xymatrix{ \cat{I} \ar@/^3ex/[r]^{fi} \ar@/_3ex/[r]_{gi} \ar@{}[r]|{\Downarrow \alpha} & \cat{Y}} \] that is universal with this property. Explicitly, the inserter is the category containing an object for each $x \in \cat{X}$ and morphism $fx \to gx$ in $\cat{Y}$. Morphisms are maps $x \to x'$ in $\cat{X}$ that yield commutative `naturality' squares in $\cat{Y}$.
  \item  The \emph{equifier} of a parallel pair of natural transformations
  \[ \xymatrix{ \cat{X} \ar@/^3ex/[r]^f \ar@/_3ex/[r]_g \ar@{}[r]|{\alpha\Downarrow \Downarrow \beta} & \cat{Y}}\]
  is a category $\cat{E}$ together with a functor $e \colon \cat{E} \to \cat{X}$ so that $\alpha e = \beta e$ and that is universal with this property. Explicitly, the equifier is the full subcategory $\cat{E} \subset \cat{X}$ spanned by objects $x \in \cat{X}$ for which $\alpha_x = \beta_x$.
 \end{itemize}
\end{definition}

For general 2-categories such as $\twocat{ACC}$, products, inserters, and equifiers are defined representably by products, inserters, and equifiers in $\twocat{CAT}$. More details can be found in the article \cite{kelly:1989}, which provides an excellent introduction to this topic.

\begin{proof}[Proof of Proposition \ref{prop:A_c-locally-presentable}]
We construct $\cat{A}_c$ as a PIE-limit in three stages. First, form the inserter $\cat{I}$ of the pair of accessible functors $1_\cat{A},c \colon \cat{A} \to \cat{A}$. Its objects are maps $a \to ca$ in $\cat{A}$ for some $a \in \cat{A}$. Morphisms are maps $a \to a'$ in $\cat{A}$ making the evident `naturality' squares commute. This category comes equipped with a forgetful functor $i \colon \cat{I} \to\cat{A}$ and a natural transformation
\[ \xymatrix{ \cat{I} \ar@/^3ex/[r]^{i} \ar@/_3ex/[r]_{ci} \ar@{}[r]|{\Downarrow \alpha} & \cat{A}} \] as in Definition \ref{def:PIE}. By \cite[5.1.6]{makkai/pare:1989}, $\cat{I}$ and $i$ are then accessible.

Next form the equifier $\cat{E}$ of the pair of natural transformations
\[ \xymatrix@=40pt{ \cat{I} \ar@/^3ex/[r]^i \ar@/_3ex/[r]_{i} \ar@{}[r]|{\mathrm{id}_i\Downarrow \Downarrow \epsilon i\cdot \alpha} & \cat{A}}\]
where $\epsilon$ is the counit for the comonad $c$. By Definition \ref{def:PIE}, $\cat{E}$ is the full subcategory of $\cat{I}$ whose objects are those $a \to ca$ of $\cat{I}$ such that the composite $a \to ca \xrightarrow{\epsilon_a} a$ is the identity at $a$. Write $e \colon \cat{E} \to \cat{I}$ for the inclusion functor. By \cite[5.1.6]{makkai/pare:1989} again, $\cat{E}$ and $e$ are accessible.

Finally, $\cat{A}_c$ is given by the equifier of the pair of natural transformations
\[ \xymatrix@=60pt{ \cat{E} \ar@/^3ex/[r]^{ie} \ar@/_3ex/[r]_{c^2ie} \ar@{}[r]|{c\alpha e \cdot \alpha e\Downarrow \Downarrow \delta ie \cdot  \alpha e} & \cat{A}}\]
where $\delta$ is the comultiplication for the comonad $c$. By Definition \ref{def:PIE}, this equifier is the full subcategory of $\cat{I}$ whose objects are maps $h \colon a \to ca$ in $\cat{A}$ so that the diagrams
\[ \xymatrix{ a \ar[r]^h \ar[dr]_{1_a} & ca \ar[d]^{\epsilon_a} & & a \ar[r]^h \ar[d]_h & ca \ar[d]^{\delta_a} \\ & a & & ca \ar[r]_{ch} & c^2a}\] commute, i.e. is equal to $\cat{A}_c$. Hence by \cite[5.1.6]{makkai/pare:1989} one more time, $\cat{A}_c$ is accessible, and so by Theorem~\ref{thm:TFAE}, $\cat{A}_c$ is locally presentable.
\end{proof}

\begin{remark}
Note the index of accessibility of $\cat{A}_c$ established by Proposition \ref{prop:A_c-locally-presentable} may be larger than the index of accessibility of both $\cat{A}$ and $c$. Thus the cardinal $\lambda$ that we use to prove Theorem \ref{thm:main} needs to be chosen to be large enough so that both Lemma \ref{lem:enriched-hom} and Theorem \ref{thm:simp-c} apply.
\end{remark}

\begin{remark}
Another type of PIE-limit is called a \emph{pseudopullback}. The (strict 2-)pullback of a diagram of categories and functors is a pseudo-pullback if one of the functors is an \emph{isofibration}, i.e., a functor with the right lifting property with respect to the inclusion of the terminal category into the category with two objects connected by an isomorphism.

The main theorem of Makkai and Rosick\'{y}~\cite{makkai/rosicky:2013} is that the 2-category of locally presentable categories equipped with a cofibrantly generated weak factorization system, colimit-preserving functors that preserve the left classes, and all natural transformations has PIE-limits created by the forgetful functor to $\twocat{CAT}$. The pseudopullback of a cofibrantly generated weak factorization system on $\cat{C}$ along a colimit-preserving functor $u \colon \cat{A} \to \cat{C}$ between locally presentable categories equipped with `trivial' weak factorization systems defines the left-induced weak factorization system on $\cat{A}$, which is therefore cofibrantly generated. This result enabled the proof of \cite[Theorem 2.23]{bayeh/hess/karpova/kedziorek/riehl/shipley:2014} which in turn was the key tool used in the current paper.
\end{remark}

\bibliographystyle{amsplain}
\bibliography{mcching}

\end{document}